\theoremstyle{plain}
\theoremstyle{definition}
\newtheorem{theorem}{Theorem}[section]
\newtheorem{lemma}[theorem]{Lemma}
\newtheorem{corollary}[theorem]{Corollary}
\newtheorem{definition}[theorem]{Definition}
\newtheorem{example}[theorem]{Example}
\newtheorem{problem}[theorem]{Problem}
\newtheorem{note}[theorem]{Note}
\newtheorem{convention}[theorem]{Convention}
\newtheorem{remark}[theorem]{Remark}
\theoremstyle{remark}
\numberwithin{equation}{section}
\newcommand{\SP}{\: \: \: \: \:}
\title{On pseudo--co--decomposition of a transformation group}
\author[S. Arzanesh, F. Ayatollah Zadeh Shirazi, R. Rezavand]{Safoura Arzanesh, Fatemah Ayatollah Zadeh Shirazi, Reza Rezavand}
\begin{document}
\begin{abstract}
In the following text we introduce the concept of pseudo--co--decomposition of a transformation group,
also we show the collection of all transformation groups pseudo--co--decomposable to distal ones is a proper intermediate class
of the class of all transformation groups and the class of all transformation groups co--decomposable to distal ones.
\end{abstract}
\maketitle
\noindent {\small {\bf 2020 Mathematics Subject Classification:}  54H15 \\
{\bf Keywords:}} Co--decomposition, Distal, Pseudo--co--decomposition, Transformation (semi)group.
\section{Introduction}
\noindent  ``Classifying'' the objects of a category, may be occurred in different ways. One may introduce (finite or infinite) chains
of objects, e.g. in TOP (category of topological spaces) this is a chain (under inclusion relation):
``The class of metrizable spaces $\subset$ The class of T$_2$ spaces $\subset$ The class of  T$_1$ spaces
$\subset$ The class of  T$_0$ spaces $\subset$ The class of  topological spaces''.
\\
Sometimes try to refine an existing classification leads to new
concepts. However one can mention several texts envolving the
matter in topological dynamics as well  as other areas of
mathematics~\cite{golestani, hj}. In this point of view for
dynamical property $\mathsf P$ we introduce the class of
transformation (semi)groups pseudo--co--demoposable to
tansformation semigroups carrying property $\mathsf P$ (with
emphasis on case $\mathsf P$ is distality/ non--minimality).
\\
In an other point of view, some authors try to understand a transformation semigroup by ``dividing'' its phase space~(\cite[Proposition 2.6]{ellis} and~\cite{chu}). In this text following~\cite{decom} we approach the matter by
``dividing'' phase (semi)group.
\\
In the following ``$\subset$'' means strict inclusion.
\subsection*{Co--decomposition vs pseudo--co--decomposition} For nonempty collection
$\{(X,S_\alpha):\alpha\in\Gamma\}$ of transformation semigroups
the concept of multi--transformation semigroup
$(X,(S_\alpha:\alpha\in\Gamma))$ has been introduced as a
generalization of bitransformation group for the first time
in~\cite{decom}, which leads to the definition of
\linebreak
co--decomposition of a transformation semigroup. In this way for
a transformation semigroup $(X,S)$ and dynamical property
$\mathsf{P}$ one may ask ``Is there any 
co--decomposition of
transformation semigroup $(X,S)$ like
$(X,(S_\alpha:\alpha\in\Gamma))$ such that for each
$\alpha\in\Gamma$, $(X,S_\alpha)$ has property $\mathsf{P}$?'' in
other words we want to know whether $(X,S)$ is co--decomposable
to $\mathsf{P}$ transformation semigroups. Co--decomposability of
a transformation semigroup to those carrying a special property
brought an other way to classify transformation semigroups, e.g.
we have the following diagram for $\mathsf{P}\in\{$distal,
equicontinuous$\}$~\cite{decom}: {\small
\\
\begin{tabular}{l}
$\:$ \\
The class of all $\mathsf{P}$ transformation semigroups
\\
\begin{tabular}{rl}
& $\subset$
The class of all transformation semigroups
co--decomposable to $\mathsf{P}$ ones \\
& $\subset$ The class of all transformation semigroups. \\
\end{tabular}
\\ $\:$ \\
\end{tabular}
\\ 
In this text we introduce the concept of pseudo--multi--transformation semigroup and pseudo--co--decomposability
to $\mathsf{P}$ transformation semigroups and via examples show
that we have the following strict inclusions:
{\small
\\
\begin{tabular}{l}
$\:$ \\
The class of all distal transformation semigroups  \\
\begin{tabular}{rl}
& $\subset$
The class of all transformation semigroups
co--decomposable to distal ones \\
& $\subset$
The class of all transformation semigroups
pseudo--co--decomposable to distal ones \\
& $\subset$  The class of all transformation semigroups. \\
\end{tabular} \\ $\:$ \\ \end{tabular}}
\noindent
In this paper our aim is to bring a more precise tool
(pseudo--co--decomposition) to classify transformation semigroups.
\section{Preliminaries}
\noindent By a transformation semigroup (group) $(X,S,\pi)$ or
simply $(X,S)$ we mean a compact Hausdorff topological space $X$,
a discrete topological semigroup (group) $S$ with identity $e$
and continuous map $\mathop{\pi:X\times S\to
X}\limits_{\SP\SP\SP(x,s)\mapsto xs}$ such that for all $x\in X$
and $s,t\in S$ we have $xe=x$ and $x(st)=(xs)t$. In
transformation semigroup $(X,S)$ we say $S$ acts effectively on
$X$, if for each distinct $s,t\in S$ there exists $x\in X$ with
$xs\neq xt$~\cite{ellis}.
\\
For nonempty collection $\{(X,S_\alpha):\alpha\in\Gamma\}$
of transformation semigroups (groups) we
say $(X,(S_\alpha:\alpha\in\Gamma))$ is a multi--transformation
semigroup (group) if for every distinct $\alpha_1,\ldots,\alpha_n
\in\Gamma$, $x\in X$, $s_1\in S_{\alpha_1},\ldots,s_n\in S_{\alpha_n}$ and permutation
$\mathop{\{1,\ldots,n\}\to
\{1,\ldots,n\}}\limits_{k\mapsto m_k}$ we have~\cite{decom}:
\[(\cdots((xs_1)s_2)\cdots)s_n=
(\cdots((xs_{m_1})s_{m_2})\cdots)s_{m_n}\:.\]
In transformation semigroup (group) $(X,S)$ we say
multi--transformation semigroup (group) $(X,(S_\alpha:\alpha\in\Gamma))$ is a co--decomposition of $(X,S)$ if
$S_\alpha$s are distinct sub--semigroups (sub--groups)
of $S$ and semigroup (group) generated by $\bigcup\{S_\alpha:
\alpha\in\Gamma\}$ is $S$.
\\
For dynamical property $\mathsf{P}$ we say the transformation
semigroup (group) $(X,S)$ is co--decomposable to $\mathsf{P}$
transformation semigroups (groups) if it has a co--decomposition
like $(X,(S_\alpha:\alpha\in\Gamma))$ to transformation
semigroups (groups) such that for all $\alpha\in\Gamma$,
$(X,S_\alpha)$ has property $\mathsf{P}$.
\\
We generalize the concept of multi--transformation semigroup and
co--decomposition of a transformation semigroup simply in the
following way:
\begin{definition}
For nonempty collection $\{(X,S_\alpha):\alpha\in\Gamma\}$
of transformation semigroups (groups) we
say $(X,(S_\alpha:\alpha\in\Gamma))$ is a pseudo--multi--transformation
semigroup (group) if for every $\alpha_1,\ldots,\alpha_n
\in\Gamma$, $x\in X$ and permutation
$\mathop{\{1,\ldots,n\}\to
\{1,\ldots,n\}}\limits_{k\mapsto m_k}$ we have:
\[(\cdots((xS_{\alpha_1})S_{\alpha_2})\cdots)S_{\alpha_n}=
(\cdots((xS_{\alpha_{m_1}})S_{\alpha_{m_2}})\cdots)
S_{\alpha_{m_n}}\:.\]
In transformation semigroup (group) $(X,S)$ we say
pseudo--multi--transformation semigroup (group) $(X,(S_\alpha:\alpha\in\Gamma))$ is a co--decomposition of $(X,S)$ if
$S_\alpha$s are distinct sub--semigroups (sub--groups)
of $S$ and semigroup (group) generated by $\bigcup\{S_\alpha:
\alpha\in\Gamma\}$ is $S$.
\\
For dynamical property $\mathsf{P}$ we say the transformation
semigroup (group) $(X,S)$ is 
\linebreak
pseudo--co--decomposable to $\mathsf{P}$
transformation semigroups (groups) if it has a 
\linebreak
pseudo--co--decomposition
like $(X,(S_\alpha:\alpha\in\Gamma))$ to transformation
semigroups (groups) such that for all $\alpha\in\Gamma$,
$(X,S_\alpha)$ has property $\mathsf{P}$.
\end{definition}
\section{Pseudo--co--decompositions a new way to classify transformation semigroups}
\noindent It's evident that every multi--transformation semigroup
is a pseudo--multi--transformation semigroup, and
every co--decomposition is a pseudo--co--decomposition. In this section we present a
pseudo--multi--transformation semigroup (pseudo--co--decomposition
of a transformation semigroup), which is not a
multi--transformation semigroup (co--decomposition). Then in
subsections we present counterexamples to show considering
\linebreak
pseudo--co--decompositions provides finer classifications on the
class of transformation groups with emphasize on distality and
non--point transitivity.
\begin{convention}
Let $\mathfrak{B}=\{\frac1n:n\geq1\}\cup\{0\}$ with induced
topology of Euclidean real line $\mathbb{R}$. Also
for permutation $\sigma:\mathop{\mathbb{N}\to\mathbb{N}}\limits_{
k\mapsto m_k}$ consider $\mathsf{f}_\sigma:\mathfrak{B}\to
\mathfrak{B}$ with:
\[x\mathsf{f}_\sigma:=\left\{\begin{array}{lc}
& \\ \frac{1}{m_k} & x=\frac1k,k\geq1\:, \\ & \\ 0 & x=0\:. \\ & 
\end{array}\right.\]
Thus $h:\mathfrak{B}\to\mathfrak{B}$ is a homeomorphism
if and only if there exists a permutation $\tau:\mathbb{N}\to
\mathbb{N}$ with $h=\mathsf{f}_\tau$.
\\
For $n\geq1$ let $T_n=\{f:\mathfrak{B}\mathop{\to}\limits^{f}\mathfrak{B}$ is bijective and for all $x\neq 0,1,\frac{1}{2},\ldots,\frac{1}{n}$ we have $xf=x\}$
(thus $T_n=\{\mathsf{f}_\sigma:\forall k\geq n\:k\sigma=k\}$)
and $T=\bigcup\{T_m:m\geq1\}$. Thus $T$ (and $T_n$s) is a group
of homeomorphisms on $\mathfrak{B}$ (under the operation
of composition of maps) which acts in a natural way on
$\mathfrak{B}$.
\end{convention}
\begin{example}\label{salam10}
$(\mathfrak{B},(T_n:n\geq1))$ is a pseudo--co--decomposition of
$(\mathfrak{B},T)$ and it is not a co--decomposition of
$(\mathfrak{B},T)$.
In particular, $(\mathfrak{B},(T_n:n\geq1))$ is a
pseudo--multi--transformation semigroup and it is not a multi--transformation semigroup.
\end{example}
\begin{proof}
Suppose $n_1,\ldots,n_k\geq1$ and $p=\max(n_1,\ldots,n_k)$, then
for every permutation $\mathop{\{1,\ldots,k\}\to\{1,\ldots,k\}}\limits_{j\mapsto m_j}$ we have $T_{n_1}\cdots T_{n_k}=T_p=T_{n_{m_1}}\cdots T_{n_{m_k}}$, hence for all $x\in \mathfrak{B}$ we have
$xT_{n_1}\cdots T_{n_k}=xT_p=xT_{n_{m_1}}\cdots T_{n_{m_k}}$ and
$(\mathfrak{B},(T_n:n\geq1))$ is a pseudo--multi--transformation semigroup.
\\
Now consider permutations $\sigma=(1 \: 2)$ and $\mu=(1 \: 3)$ on
$\mathbb{N}$, then $\mathsf{f}_\sigma\in T_2$ and
$\mathsf{f}_\mu\in T_3$
and
$1\mathsf{f}_\sigma\mathsf{f}_\mu=\frac12\neq\frac13=1
\mathsf{f}_\mu\mathsf{f}_\sigma$,
so $(\mathfrak{B},(T_n:n\geq1))$ is not a multi--transformation semigroup.
\end{proof}
\subsection{Is $(\mathfrak{B},T)$ (pseudo--)co--decomposable to distal transformation semigroups?}
\noindent In this sub--section we show $(\mathfrak{B},T)$
is pseudo--co--decomposable to distal
transformation groups, however it is not co--decomposable
to distal transformation semigroups.
\\
In transformation semigroup $(X,S)$ consider proximal
relation
$P(X,S)=\{(x,y)\in X\times X:$ there exists a net $\{t_\alpha\}_{
\alpha\in\Gamma}$ is $S$ and $z\in X$ such that
$\mathop{\lim}\limits_{\alpha\in\Gamma}xt_\alpha=z=
\mathop{\lim}\limits_{\alpha\in\Gamma}yt_\alpha\}$. We
say the transformation semigroup $(X,S)$ is distal
if $P(X,S)=\Delta_X(=\{(x,x):x\in X\})$~\cite{ellis}.
\begin{lemma}\label{salam50}
In transformation semigroup $(\mathfrak{B},S)$ suppose
$S\subseteq\{\mathsf{f}_\sigma:\sigma$ is a permutation on
$\mathbb{N}\}$. The following statements are equivalent:
\begin{itemize}
\item[1.] $(\mathfrak{B},S)$ is distal,
\item[2.] $(\mathfrak{B},S)$ has a finite pseudo-co-decomposition
    $(\mathfrak{B},(S_i:1\leq i\leq n))$ to distal
    transformation semigroups,
\item[3.] for all $x\in \mathfrak{B}$, $xS$ is finite,
\item[4.] for all $x\in \mathfrak{B}\setminus\{0\}$, $xS$ is finite.
\end{itemize}
\end{lemma}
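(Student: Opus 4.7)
The plan is to establish the cycle $1 \Rightarrow 3 \Rightarrow 1$, note $3 \Leftrightarrow 4$ trivially (because $0\mathsf{f}_\sigma = 0$ forces $0S = \{0\}$), dispatch $1 \Rightarrow 2$ trivially, and finally do the only substantive remaining step, $2 \Rightarrow 3$. Throughout, the key structural fact about $\mathfrak{B}$ is that every $\mathsf{f}_\sigma$ is a bijection of $\mathfrak{B}$ fixing $0$, and that $\mathfrak{B}\setminus\{0\}$ is a discrete orbit of isolated points whose only accumulation point in $\mathfrak{B}$ is $0$.

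For $1 \Rightarrow 3$, I would argue contrapositively: if $xS$ is infinite for some $x$, then necessarily $x \neq 0$ and $xS \subseteq \{1/n : n\geq 1\}$, so by compactness of $\mathfrak{B}$ the infinite set $xS$ must accumulate at its only possible accumulation point $0$. Picking $t_\alpha \in S$ with $xt_\alpha \to 0$ and observing that $0 t_\alpha = 0$ for every $\alpha$ yields $(x,0) \in P(\mathfrak{B},S)$ with $x \neq 0$, contradicting distality. For $3 \Rightarrow 1$, suppose $(x,y) \in P(\mathfrak{B},S)$ and let $xt_\alpha, yt_\alpha \to z$. Because $xS$ and $yS$ are finite, nets in them converging in a Hausdorff space must be eventually constant, so there is some $t \in S$ with $xt = yt = z$. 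Since $t = \mathsf{f}_\tau$ is a bijection of $\mathfrak{B}$, this forces $x = y$ (a small case analysis handles $z=0$: if $y \neq 0$ then $yS \subseteq \{1/n\}$ is a finite set bounded away from $0$, so $z = 0$ is impossible).

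The implication $1 \Rightarrow 2$ is immediate by taking the trivial pseudo-co-decomposition with $n=1$ and $S_1 = S$. For $2 \Rightarrow 3$, let $(\mathfrak{B},(S_i:1\leq i\leq n))$ be a finite pseudo-co-decomposition into distal pieces. By the already established $1 \Rightarrow 3$ applied to each $S_i$, every orbit $xS_i$ is finite. I would then use the pseudo-multi-transformation axiom to argue that any iterated product $xS_{i_1}S_{i_2}\cdots S_{i_k}$ only depends on the multiset $\{i_1,\ldots,i_k\}$, and since each $S_i$ is a semigroup, grouping identical adjacent factors yields $xS_{i_1}\cdots S_{i_k} \subseteq xS_{j_1}S_{j_2}\cdots S_{j_m}$ where $\{j_1<\cdots<j_m\}$ is the underlying set of distinct indices. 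An easy induction on $m$ (expanding $xS_{j_1}\cdots S_{j_m} = \bigcup_{y \in xS_{j_1}\cdots S_{j_{m-1}}} yS_{j_m}$) shows each such set is finite, and since there are only $2^n$ subsets of $\{1,\ldots,n\}$, the orbit $xS$ is a finite union of finite sets.

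The main obstacle is the last implication $2 \Rightarrow 3$: one must exploit the pseudo-multi-transformation symmetry to reduce the potentially unbounded word length to a fixed finite collection of ``sorted, repetition-free'' products; the distality of the individual $S_i$ alone is not enough, one genuinely needs the commutation-of-orbits condition to keep the orbit from exploding as longer words are formed.
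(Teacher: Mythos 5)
Your proposal is correct and follows essentially the same route as the paper: the cycle through orbit--finiteness (proximality of $x$ with $0$ when $xS$ is infinite, eventual constancy/openness plus injectivity of $\mathsf{f}_\sigma$ for the converse), the trivial $1\Rightarrow 2$, and for $2\Rightarrow 3$ the reordering granted by the pseudo--multi condition combined with absorption $S_iS_i\subseteq S_i$. Your version of $2\Rightarrow 3$ (bounding $xS$ by the finitely many sorted, repetition-free products $xS_{j_1}\cdots S_{j_m}$) is in fact a slightly more careful rendering of the paper's terse claim $xS=xS_1\cdots S_n$, but the underlying idea is identical.
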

\begin{proof} First of all note that $0S=\{0\}$ is finite, hence (3) and (4) are equivalent.
\\
``(1)$\Rightarrow$(4)'' If there exists $x\in \mathfrak{B}\setminus\{0\}$
with infinite $xS$, then there exists a sequence $\{s_n\}_{n\geq1}$ in $S$ such that
$\{xs_n\}_{n\geq1}$ is a one--to--one sequence in $xS\subseteq\{\frac1n:n\geq1\}=\mathfrak{B}\setminus\{0\}$
thus $\mathop{\lim}\limits_{n
\to\infty}xs_n=0(=\mathop{\lim}\limits_{n\to\infty}0s_n)$,
and $(0,x)\in P(\mathfrak{B},S)$, hence $(\mathfrak{B},S)$
is not distal.
\\
``(3)$\Rightarrow$(1)'' Suppose for all $x\in \mathfrak{B}$, $xS$
is finite and $(u,v)\in P(\mathfrak{B},S)$, then there exists a
sequence $\{t_n\}_{n\geq1}$ in $S$ such that
$z:=\mathop{\lim}\limits_{n\to\infty}ut_n=
\mathop{\lim}\limits_{n\to\infty}vt_n$. Thus $uS\cap
vS=\overline{uS}\cap\overline{vS}\ni z$. If $u\neq v$, then we
may suppose $u\neq0$ hence $0\notin uS$ and $z\neq 0$ which leads
to openness of $\{z\}$. There exists $N\geq1$ such that for all
$n\geq N$ we have $ut_n=z=vt_n$ which shows $u=v$ and distality
of $(\mathfrak{B},S)$.
\\
``(1)$\Rightarrow$(2)'' It's clear that (1) implies (2).
\\
``(2)$\Rightarrow$(4)''
Suppose $(\mathfrak{B},(S_i:1\leq i\leq n))$ is a
pseudo--co--decomposition of $(\mathfrak{B},S)$ to distal
transformation semigroups, then for each $1\leq i\leq n$, $(X,S_i)$ is distal and $xS_i$ is finite for each $x\in X$
(since (1) and (3) are equivalent). Thus for each $x\in X$, $xS_1\cdots S_n$ is finite, using
definition of pseudo-co-decomposition we have $xS=xS_1\cdots S_n$, hence $xS$ is finite
which shows distality of $(\mathfrak{B},S)$.
\end{proof}
\begin{lemma}\label{salam20}
In transformation semigroup $(X,S)$ suppose $S$ acts effectively on $X$ and $(X,(S_\alpha:\alpha\in\Gamma))$ is a co--decomposition of $(X,S)$, in particular suppose $e\in\bigcap\{S_\alpha:\alpha\in\Gamma\}$, then
for each $\alpha,\beta\in\Gamma$ we have (for $A\subseteq S$ suppose $<A>$ is the sub--semigroup
of $S$ generated by $A$):
\begin{itemize}
\item[1.] $<S_\alpha \cup S_\beta>=S_\alpha S_\beta$,
\item[2.] $S=\bigcup\{S_{i_1}\cdots S_{i_n}:n\geq1$ and
    $i_1,\ldots,i_n\in\Gamma$ are distinct$\}$,
\item[3.] there exists
    a subset $\Lambda$ of $\Gamma$
    with ${\rm card}(\Lambda)\leq\max(\aleph_0,{\rm card}(S))$
    such that $(X,(S_\alpha:\alpha\in\Lambda))$ is a co--decomposition of $(X,S)$.
\end{itemize}
\end{lemma}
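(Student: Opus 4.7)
The plan is to extract one key fact and then harvest the three parts as consequences. The key fact is a commutativity lemma: whenever $\alpha\neq\beta$ in $\Gamma$, $s\in S_\alpha$, and $t\in S_\beta$, one has $st=ts$ in $S$. This is pulled out of the multi-transformation semigroup axiom applied with $n=2$, which yields $x(st)=(xs)t=(xt)s=x(ts)$ for every $x\in X$, combined with the assumption that $S$ acts effectively on $X$. Effectiveness is essential here, since the axiom a priori only equates the actions of $st$ and $ts$, not those two elements of $S$ themselves.

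Once this commutativity is in hand, part (1) becomes almost immediate. I would first verify that $S_\alpha S_\beta$ is a sub-semigroup: given $s_1t_1,s_2t_2\in S_\alpha S_\beta$ with $s_i\in S_\alpha$ and $t_i\in S_\beta$, commuting $t_1$ past $s_2$ collapses the product to $(s_1s_2)(t_1t_2)\in S_\alpha S_\beta$, since $S_\alpha$ and $S_\beta$ are sub-semigroups. Because $e\in S_\alpha\cap S_\beta$, both $S_\alpha$ and $S_\beta$ sit inside $S_\alpha S_\beta$, giving $\langle S_\alpha\cup S_\beta\rangle\subseteq S_\alpha S_\beta$; the reverse inclusion is automatic.

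For part (2), I would use an analogous sorting procedure. Every $s\in S$ is a finite word in $\bigcup_\alpha S_\alpha$, say $s=u_1\cdots u_m$ with $u_j\in S_{\alpha_j}$. Applying the commutativity lemma repeatedly, I may first move all factors from the same $S_\alpha$ next to each other; then using that each $S_\alpha$ is a sub-semigroup, I collapse each block into a single factor. This exhibits $s$ as a product $s_{i_1}\cdots s_{i_n}$ with pairwise distinct $i_1,\ldots,i_n\in\Gamma$, which is exactly the claimed decomposition.

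Part (3) is then a bookkeeping exercise. For each $s\in S$, fix a representation as in (2) and let $F(s)\subseteq\Gamma$ be the finite set of indices appearing in it; set $\Lambda:=\bigcup_{s\in S}F(s)$, so $\mathrm{card}(\Lambda)\leq\max(\aleph_0,\mathrm{card}(S))$. The sub-semigroup generated by $\bigcup_{\alpha\in\Lambda} S_\alpha$ contains every $s\in S$ and hence equals $S$; the multi-transformation semigroup axiom is universal in the indices, so it trivially restricts from $\Gamma$ to $\Lambda$; and the $S_\alpha$s remain pairwise distinct. Therefore $(X,(S_\alpha:\alpha\in\Lambda))$ is a co-decomposition of $(X,S)$. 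The only genuinely conceptual step in this whole argument is the commutativity lemma that opens the proof; once it is available, parts (1) and (2) are algebraic rearrangements and (3) is a cardinal arithmetic count.
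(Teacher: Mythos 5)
Your proposal is correct and follows essentially the same route as the paper: the paper likewise extracts $s_\alpha s_\beta=s_\beta s_\alpha$ (for $s_\alpha\in S_\alpha$, $s_\beta\in S_\beta$, $\alpha\neq\beta$) from the multi-transformation axiom plus effectiveness, treats (1) and (2) as consequences of this commutativity, and proves (3) by collecting, for each $t\in S$, the finitely many indices in a representation from (2) and taking the union over $S$. You merely spell out the sorting and sub-semigroup details that the paper leaves implicit, while the paper's padding of index sequences with a fixed $\theta$ is just a different bookkeeping of the same cardinality count.
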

\begin{proof} 1, 2) Note that for each distinct $\alpha,\beta\in\Gamma$, $s_\alpha\in S_\alpha$ and $s_\beta\in S_\beta$ we have:
\[\forall x\in X\:\: xs_\alpha s_\beta=xs_\beta s_\alpha\]
thus $s_\alpha s_\beta=s_\beta s_\alpha$ since $(X,S)$ is effective.
\\
3)  Consider $\theta\in\Gamma$. Using (2), for each $t\in S$ we may choose a finite
sequence $(\alpha_1^t,\cdots,\alpha_{n_t}^t)$ of elements
of $\Gamma$ such that
\[t\in S_{\alpha_1^t}\cdots S_{\alpha_{n_t}^t}\:.\]
Let
$(w_k^t)_{k\geq1}:=(\alpha_1^t,\cdots,\alpha_{n_t}^t,\theta,\theta,\cdots)$. Then for each $k\geq1$, $\Lambda_k=\{w_k^t:t\in S\}$ is a subset of $\Gamma$ with ${\rm card}(\Lambda_k)\leq
{\rm card}(S)$. Let $\Lambda=\bigcup\{\Lambda_k:k\geq1\}$,
then ${\rm card}(\Lambda)\leq\max(\aleph_0,{\rm card}(S))$.
Moreover $S$ is sub--semigroup generated by $\{S_i:i\in\Lambda\}$, thus
$(X,(S_\alpha:\alpha\in\Lambda))$ is a co--decomposition of $(X,S)$.
\end{proof}
\begin{theorem}\label{salam60}
The transformation semigroup $(\mathfrak{B},T)$ is not
co--decomposable to distal transformation semigroups.
\end{theorem}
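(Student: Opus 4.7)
The plan is to argue by contradiction: assume $(\mathfrak{B},(S_\alpha:\alpha\in\Gamma))$ is a co--decomposition of $(\mathfrak{B},T)$ into distal transformation groups, and show that each $S_\alpha$ is forced to be a normal subgroup of $T$ with only finite orbits, hence trivial, contradicting $T\neq\{e\}$.

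To set things up I would first verify the hypotheses of Lemma \ref{salam20}. The group $T$ acts effectively on $\mathfrak{B}$: distinct permutations $\sigma\neq\tau$ of $\mathbb{N}$ disagree on some $k\geq 1$, so $\mathsf{f}_\sigma$ and $\mathsf{f}_\tau$ differ on $\frac{1}{k}$; and $e$ lies in every $S_\alpha$ since the $S_\alpha$ are subgroups of $T$. Lemma \ref{salam20}(1) then yields that for distinct $\alpha,\beta$, every element of $S_\alpha$ commutes with every element of $S_\beta$. Setting $H_\alpha:=\langle\bigcup_{\beta\neq\alpha}S_\beta\rangle$, the subgroup $S_\alpha$ commutes elementwise with $H_\alpha$, and $T$ is generated by $S_\alpha\cup H_\alpha$, so $T=S_\alpha H_\alpha$. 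Being normalised by itself and by the commuting factor $H_\alpha$, $S_\alpha$ is normal in $T$. Separately, distality of $(\mathfrak{B},S_\alpha)$ combined with Lemma \ref{salam50} forces every orbit of $S_\alpha$ on $\mathfrak{B}$ to be finite.

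The technical heart of the argument is to prove that every non--trivial normal subgroup $N$ of $T$ acts transitively on $\mathfrak{B}\setminus\{0\}$. I would pick $g=\mathsf{f}_\sigma\in N\setminus\{e\}$ and $a\in\mathbb{N}$ with $b:=a\sigma\neq a$. Given an arbitrary $c\in\mathbb{N}$, transitivity of $T$ on $\mathbb{N}$ provides $\tau_0\in T$ with $a\tau_0=c$, and for each $\tau'\in\mathrm{Stab}_T(c)$ the conjugate $(\tau_0\tau')^{-1}g(\tau_0\tau')\in N$ sends $c$ to $(b\tau_0)\tau'$. One checks $b\tau_0\neq c$ (else $a=b$), and $\mathrm{Stab}_T(c)$ is the finitary symmetric group on $\mathbb{N}\setminus\{c\}$, hence transitive on it, so the image ranges over all of $\mathbb{N}\setminus\{c\}$. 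Therefore the $N$--orbit of $\frac{1}{c}$ is infinite.

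Combining the last two paragraphs, each $S_\alpha$ is a normal subgroup of $T$ with finite orbits, hence $S_\alpha=\{e\}$. Because the $S_\alpha$ are distinct, $|\Gamma|\leq 1$, and then $\langle\bigcup_\alpha S_\alpha\rangle=\{e\}\neq T$, violating the co--decomposition condition. The main obstacle I expect is the transitivity claim for non--trivial normal subgroups of $T$; the rest is a direct application of Lemma \ref{salam20} and Lemma \ref{salam50}.
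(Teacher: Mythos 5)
Your argument is correct, but it takes a genuinely different route from the paper's. The paper first invokes Lemma~\ref{salam20}(3) together with countability of $T$ to reduce to a countable co--decomposition $(\mathfrak{B},(S_n:n\geq1))$, absorbs finitely many factors into one factor $K_1$ containing $T_3$, and then runs an explicit permutation computation (the two Claims about $1K_n=\{1\}$ and $\tfrac1s K_n=\{\tfrac1s\}$) to contradict finiteness of $1K_1$. You instead exploit the structure of $T$ as the finitary symmetric group: elementwise commutation of distinct factors (Lemma~\ref{salam20}(1), using effectiveness) makes each $S_\alpha$ centralized by the subgroup generated by the remaining factors, hence normal in $T=S_\alpha H_\alpha$; a direct conjugation argument shows every nontrivial normal subgroup of $T$ has infinite orbits on $\mathfrak{B}\setminus\{0\}$; and Lemma~\ref{salam50} forces finite orbits from distality, so every $S_\alpha$ is trivial, which is incompatible with generating $T$. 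Your approach buys uniformity (no cardinality reduction via Lemma~\ref{salam20}(3) is needed, and arbitrary index sets $\Gamma$ are handled at once) at the price of a genuinely group--theoretic step (normality and the orbit claim), whereas the paper stays with elementary pointwise computations. One small point you should make explicit: the theorem concerns co--decomposability to distal transformation \emph{semigroups}, so a priori the $S_\alpha$ are only sub--semigroups of $T$; since every element of $T$ has finite order, each nonempty sub--semigroup of $T$ automatically contains $e$ and is a subgroup, which both legitimizes your ``subgroup'' assumption and supplies the hypothesis $e\in\bigcap\{S_\alpha:\alpha\in\Gamma\}$ of Lemma~\ref{salam20}; as written you assert rather than justify this.
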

\begin{proof}
First note that for all $x\in \mathfrak{B}\setminus\{0\}$ the set
$xT=\mathfrak{B}\setminus\{0\}$ is infinite, thus
$(\mathfrak{B},T)$ is not distal by Lemma~\ref{salam50}. Suppose
$(\mathfrak{B},(S_\alpha:\alpha\in\Gamma))$ is a
co--decomposition of $(\mathfrak{B},T)$ to distal transformation
semigroups, then by item (3) of Lemma~\ref{salam20} and
countability of $T$, there exists a countable subset $\Lambda$ of
$\Gamma$ such that $(\mathfrak{B},(S_\alpha:\alpha\in\Lambda))$
is a co--decomposition of $(\mathfrak{B},T)$ (to distal
transformation semigroups). Since $(\mathfrak{B},T)$ is not
distal, by Lemma~\ref{salam50}, $\Lambda$ is not finite, hence
$\Lambda$ is infinite countable.
\\
So we may suppose $\Lambda=\mathbb{N}$ and
$(\mathfrak{B},(S_n:n\geq1))$ is a co--decomposition of $(X,T)$
to distal transformation semigroups. Since $T_3$ is a finite
subset of $T(=\bigcup\{S_1\cdots S_n:n\geq1\})$, there exists
$p\geq1$ such that $T_3\subseteq S_1\cdots S_p$. Suppose
$\{K_n:n\geq1\}=\{S_n:n>p\}\cup\{S_1\cdots S_p\}$ with
$K_1=S_1\cdots S_p$ and distinct $K_n$s. Then
$(\mathfrak{B},(K_n:n\geq1))$ is a co--decomposition of $(X,T)$
to distal transformation semigroups with $T_3\subseteq K_1$.
Let's continue the proof via the following claims:
\\
{\bf Claim 1.} $1K_n=\{1\}$ for all $n\geq2$.
\\
{\it Proof of Claim 1.} Consider $n\geq2$, note that $1K_n\subseteq 1T=\mathfrak{B}\setminus\{0\}=\{\frac1t:t\geq1\}$.
For $t>1$  if $\frac1t\in 1K_n$ then
there exists permutation $\mu:\mathbb{N}\to\mathbb{N}$ with
$1\mu=t$ and $\mathsf{f}_\mu\in K_n$, so for
permutation $\theta=(1\: \: q)$ with $q\in\{2,3\}\setminus\{t\}$
we have $\mathsf{f}_\theta\in T_3\subseteq K_1$, hence
$\frac1q\mathsf{f}_\theta\mathsf{f}_\mu=
\frac1q\mathsf{f}_\mu\mathsf{f}_\theta$ thus
$\frac{1}{1\mu}=\frac{1}{q\mu\theta}$ and
$t=q\mu\theta$ hence $q\mu=t\theta^{-1}=t=1\mu$ which
leads to contradiction $q=1$. Hence $\frac1t\notin 1K_n$
and $1K_n=\{1\}$.
\\
{\bf Claim 2.} For $s,n\geq2$ with  $ \frac1s   \in 1K_1$ we have $\frac1s K_n=\{\frac1s\}$.
\\
{\it Proof of Claim 2.} For $n,s\geq2$ and $t\geq1$ suppose
$\frac1s \in 1K_1$ and $\frac1t\in\frac1s K_n$. There exist permutations $\mu,\lambda$ on
$\mathbb N$ with $1\mu=s$, $s\lambda=t$, $\mathsf{f}_\mu\in K_1$
and $\mathsf{f}_\lambda\in K_n$. By Claim 1, $\frac1{1\lambda}=1\mathsf{f}_\lambda\in1K_n=\{1\}$ and  $1\lambda=1$. So $\frac{1}{1\mu\lambda}=
1\mathsf{f}_\mu\mathsf{f}_\lambda=1
\mathsf{f}_\lambda\mathsf{f}_\mu
=\frac{1}{1\lambda\mu}$ and
$t=s\lambda=1\mu\lambda=1\lambda\mu= 1\mu=s$. Hence
$\frac1s K_n=\{\frac1s\}$.
\\
Using Lemma~\ref{salam50} (since $(\mathfrak{B},K_1)$ is distal),
$1K_1$ is finite and there exists $q>1$ with
$\frac1q\in\mathfrak{B}\setminus 1K_1$. Consider
permutation $\psi=(1\: q)$ on $\mathbb{N}$ then
$\mathsf{f}_\psi\in T(=\bigcup\{K_1\cdots K_n:n\geq1\})$
thus there exists $m\geq1$, $h_1\in K_1,\ldots,h_m\in K_m$
with $\mathsf{f}_\psi=h_1\cdots h_m$.
For all $i\in\{2,\ldots,m\}$ using Claim 1, $1h_i\in 1K_i=\{1\}$, thus
$1h_1h_i=1h_ih_1=1h_1$, and $1h_1=1h_1\cdots h_m=1\mathsf{f}_\psi=\frac{
1}{1\psi}=\frac1q$ which leads to contradiction $\frac1q\in1K_1$.
\\
Therefore $(\mathfrak{B},T)$ is not co--decomposable to distal transformation semigroups.
\end{proof}
\begin{example}\label{salam70}
Using Theorem~\ref{salam60}, $(\mathfrak{B},T)$ is not
co--decomposable to distal transformation semigroups, however for
all $n\geq1$, $T_n$ is a finite group and $(\mathfrak{B}, T_n)$
is a distal transformation group, so $(\mathfrak{B},
(T_n:n\geq1))$ is a pseudo--co--decomposition of $(
\mathfrak{B},T)$ to distal transformation groups (see
Example~\ref{salam10}).
\end{example}
\begin{example}\label{salam80}
Let $G=\{\mathsf{f}_\sigma:\sigma$ is a permutation on $\mathbb{N}\}$. Then the transformation group
$(\mathfrak{B},G)$ is not pseudo--co--decomposable to distal transformation semigroups.
\end{example}
\begin{proof}
Suppose $(\mathfrak{B},(S_\alpha:\alpha\in\Gamma))$ is a
pseudo--co--decomposition of $(\mathfrak{B},G)$ to distal
transformation semigroups, then for all $x\in\mathfrak{B}$ and
$\alpha\in\Gamma$, $xS_\alpha$ is finite (use Lemma~\ref{salam50}), thus for all $\alpha_1,\ldots,\alpha_n\in
\Gamma$ and $x\in \mathfrak{B}$, $xS_{\alpha_1}\cdots S_{\alpha_n}$ is finite.
\\
Consider $\sigma:\mathbb{N}\to\mathbb{N}$ with:
\[\cdots\mathop{\rightarrow}\limits^{\sigma}6
\mathop{\rightarrow}\limits^{\sigma}4
\mathop{\rightarrow}\limits^{\sigma} 2
\mathop{\rightarrow}\limits^{\sigma}1
\mathop{\rightarrow}\limits^{\sigma}3
\mathop{\rightarrow}\limits^{\sigma}5
\mathop{\rightarrow}\limits^{\sigma}7
\mathop{\rightarrow}\limits^{\sigma}\cdots\]
then $\mathsf{f}_\sigma\in G$ and there exist
$\beta_1,\ldots,\beta_m\in\Gamma$ with
$f_\sigma\in S_{\beta_1}\cdots S_{\beta_m}$, thus for all $k\geq1$
we have $1f_\sigma^k\in1\mathop{\underbrace{(S_{\beta_1}\cdots S_{\beta_m})\cdots(S_{\beta_1}\cdots S_{\beta_m})}}\limits_{k{\rm \: times}}=1S_{\beta_1}\cdots S_{\beta_m}$.
So
\[\{1,\frac13,\frac15,\ldots\}=\{1f_\sigma^k:k\geq1\}\subseteq
1S_{\beta_1}\cdots S_{\beta_m}\]
which is in contradiction with finiteness of
$1S_{\beta_1}\cdots S_{\beta_m}$ and
$(\mathfrak{B},G)$ is not pseudo--co--decomposable to
distal transformation semigroups.
\end{proof}
\begin{remark}\label{salam90}
There exist non--distal transformation groups co--decomposable to distal ones~\cite[Theorem~3.5]{decom}.
\end{remark}
\begin{corollary}\label{taha10}
Using Examples~\ref{salam70},~\ref{salam80} and
Remark~\ref{salam90}, we have the following strict inclusions:
(compare with~\cite[Theorem 3.5]{decom}): 
\vspace{5mm}
{\small\begin{center}
\begin{tabular}{|c|}
\hline The class  of all transformation semigroups\\
    \begin{tabular}{|c|} \hline
    The class  of all transformation semigroups pseudo--co--decomposable to distal ones\\
        \begin{tabular}{|c|} \hline
        The class  of all transformation semigroups co--decomposable to distal ones\\
            \begin{tabular}{|c|} \hline
            The class  of all distal transformation semigroups \\
            (Example: trivial transformation group $(X,\{id_X\})$) \\ \hline
            \end{tabular}\\
        Remark~\ref{salam90} \\ \hline
        \end{tabular}\\
    Example~\ref{salam70} \\ \hline
    \end{tabular}\\
Example~\ref{salam80} \\ \hline
\end{tabular}
\end{center}}
\end{corollary}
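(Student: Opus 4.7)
The corollary is a compilation of three strict inclusions, so the plan is simply to verify each inclusion (non-strict part, then strictness) by pointing to the already-established results in the paper. No new machinery should be needed.

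First I would observe the three non-strict inclusions, which are all immediate from the definitions in Section~2. A distal transformation semigroup $(X,S)$ has the trivial co-decomposition $(X,(S_\alpha:\alpha\in\{*\}))$ with $S_*=S$, so the class of distal transformation semigroups is contained in the class of those co-decomposable to distal ones. Every co-decomposition is a pseudo-co-decomposition (as noted at the beginning of Section~3, since a multi-transformation semigroup is a pseudo-multi-transformation semigroup), so the class of transformation semigroups co-decomposable to distal ones is contained in the pseudo-co-decomposable class. Finally, the class of pseudo-co-decomposable transformation semigroups is trivially contained in the class of all transformation semigroups.

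Next I would establish the strictness of each inclusion by citing the appropriate example or remark. For the innermost inclusion (distal $\subsetneq$ co-decomposable to distal), Remark~\ref{salam90} supplies a non-distal transformation group admitting a co-decomposition to distal ones. For the middle inclusion, Example~\ref{salam70} exhibits $(\mathfrak{B},T)$ together with its pseudo-co-decomposition $(\mathfrak{B},(T_n:n\geq 1))$ to distal transformation groups (each $T_n$ being finite), while Theorem~\ref{salam60} shows $(\mathfrak{B},T)$ is not co-decomposable to distal transformation semigroups. For the outermost inclusion, Example~\ref{salam80} exhibits the transformation group $(\mathfrak{B},G)$, which is not pseudo-co-decomposable to distal transformation semigroups but is of course a transformation semigroup.

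Since the four classes are then chained by these strict inclusions, the corollary follows directly. There is no real obstacle here because all the substantive work has already been carried out in Theorem~\ref{salam60}, Example~\ref{salam70}, Example~\ref{salam80} and Remark~\ref{salam90}; the only thing to do is present the hierarchy cleanly and make sure no example is reused where a strict-inclusion witness is required.
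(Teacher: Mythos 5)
Your proposal is correct and matches the paper's justification exactly: the paper gives no separate proof beyond citing Remark~\ref{salam90}, Example~\ref{salam70} (which rests on Theorem~\ref{salam60}), and Example~\ref{salam80} as the respective strictness witnesses, with the non-strict inclusions being immediate from the definitions.
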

\begin{note}
One may study intraction of a transformation semigroup and its pseudo--co--decompositions via operators like
product, disjoint union, quotient of transformation semigroups using similar methods described in~\cite{decom}.
\end{note}
\subsection{A glance at minimality and point transitivity approach}
\noindent We say the transformation semigroup $(X,S)$ is point
transitive if there exists $x\in X$ such that $\overline{xS}=X$,
moreover $(X,S)$ is minimal if for each $x\in X$,
$\overline{xS}=X$. It's evident that $(X,S)$ is point transitive
(resp. minimal) if and only if $S$ has a sub--semigroup like
$S_0$ such that $(X,S_0)$ is point transitive (resp. minimal).
Hence $(X,S)$ is point transitive (resp. minimal) if and only if
it has a pseudo--co--decomposition
$(X,(S_\alpha:\alpha\in\Gamma))$ such that $(X,S_\alpha)$ is
point transitive (resp. minimal) for some $\alpha\in\Gamma$.
Hence what is interesting for us is pseudo--co--decomposability
of a point transitive (resp. minimal) transformation semigroup to
non--point transitive (resp. minimal) transformation semigroups.
\\
Consider unit circle $\mathbb{S}^1=\{e^{i\theta}:\theta\in\mathbb{R}\}$, also for $\alpha\in\mathbb{R}$ suppose:
\\
$\bullet$ $\mathop{\varphi_\alpha:\mathbb{S}^1\to\mathbb{S}^1}\limits_{\SP\SP\SP e^{i\theta}\mapsto e^{i(\alpha+\theta)}}$
    be $\alpha-$rotation in unit circle,
\\
$\bullet$ $\mathop{\varepsilon_\alpha:\mathbb{S}^1\to\mathbb{S}^1}\limits_{\SP\SP\SP e^{i\theta}\mapsto e^{i(\alpha-\theta)}}$
    be composition of $\alpha-rotation$ and
    conjugate map $\mathop{\eta:\mathbb{S}^1\to\mathbb{S}^1}\limits_{\SP e^{i\theta}\mapsto e^{-i\theta}}$.
\\
Then $\Sigma=\{\varphi_\alpha:\alpha\in2\pi\mathbb{Q}\}$ is the group of rational multiplication of $2\pi$
rotations on unit circle and $\Sigma^*=\{\varphi_\alpha:\alpha\in2\pi\mathbb{Q}\}\cup
\{\varepsilon_\alpha:\alpha\in2\pi\mathbb{Q}\}$ is the group generated by conjugate map and rational multiplication of $2\pi$
rotations on unit circle.
\begin{remark}\label{taha20}
$(\mathbb{S}^1,\Sigma)$ is a minimal (point transitive) transformation group, co--decomposable to non--minimal transformation
groups. In fact $(\mathbb{S}^1,(\{\varphi_\alpha^n:n\in\mathbb{N}\}:\alpha\in \{\frac{2\pi}{m}:m\geq1\}))$
is a co--decomposition of
$(\mathbb{S}^1,\Sigma)$ to non--minimal (non--point transitive) transformation groups~\cite[Counterexample 3.3]{decom}.
\end{remark}
\noindent $(\mathbb{S}^1,\Sigma^*)$ is minimal since $(\mathbb{S}^1,\Sigma)$ is minimal and $\Sigma\subseteq\Sigma^*$.
We show $(\mathbb{S}^1,\Sigma^*)$ is pseudo--co--decomposable to non--minimal transformation groups
and it is not co--decomposable to non--minimal transformation semigroups.
\\
Item (1) of the following lemma shows that $\Sigma^*$ is semigroup generated by $\eta$ and $\Sigma$. Since all elements
of $\Sigma^*$ have finite order,  $\Sigma^*$ is a group (under the operation of composition of maps) too. By item (3)
of the following Lemma $\Sigma^*$ is non--abelian.
\begin{lemma}\label{taha30}
We have:
\begin{itemize}
\item[1.] $\varepsilon_\alpha=\eta\varphi_\alpha=\varphi_{-\alpha}\eta$ for each $\alpha\in\mathbb{R}$,
\item[2.] $\{h\in\Sigma^*:h^2=id_{\mathbb{S}^1}\}=\{\varepsilon_\alpha:\alpha\in2\pi\mathbb{Q}\}\cup\{\varphi_\alpha:\alpha\in\pi\mathbb{Z}\}$,
\item[3.] $\{h\in\Sigma^*:h\eta=\eta h\}=\{\eta, id_{\mathbb{S}^1}, \varphi_\pi,\varepsilon_\pi\}$,
\item[4.] each sub--semigroup of $\Sigma^*$ is a sub--group of $\Sigma^*$.
\end{itemize}
\end{lemma}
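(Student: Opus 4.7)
The plan is that each of the four items reduces to a direct computation in $\Sigma^*$, and I would organize the proof so that item (1) produces the algebraic identities that drive items (2) and (3), and items (2)--(3) supply the finite--order information needed for item (4). Throughout I will keep in mind the right--action convention (so $x h_1 h_2$ means first apply $h_1$ then $h_2$) and the fact that rotations are periodic modulo $2\pi$.

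For item (1), I would unfold the definitions and compute $e^{i\theta}\eta\varphi_\alpha=e^{-i\theta}\varphi_\alpha=e^{i(\alpha-\theta)}$ and $e^{i\theta}\varphi_{-\alpha}\eta=e^{i(-\alpha+\theta)}\eta=e^{i(\alpha-\theta)}$, both of which equal $e^{i\theta}\varepsilon_\alpha$ by definition. For item (2), I would handle the two generating families separately: a rotation satisfies $\varphi_\alpha^2=\varphi_{2\alpha}=id_{\mathbb{S}^1}$ iff $2\alpha\in 2\pi\mathbb{Z}$, i.e.\ $\alpha\in\pi\mathbb{Z}$; and for $\varepsilon_\alpha$ I would use item (1) to write $\varepsilon_\alpha^2=\eta\varphi_\alpha\eta\varphi_\alpha=\eta(\eta\varphi_{-\alpha})\varphi_\alpha=\varphi_0=id_{\mathbb{S}^1}$, showing that every $\varepsilon_\alpha$ is an involution. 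These two observations together yield the claimed equality.

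For item (3), I would again split by cases. If $h=\varphi_\alpha$, then by item (1) we have $\varphi_\alpha\eta=\varepsilon_{-\alpha}$ and $\eta\varphi_\alpha=\varepsilon_\alpha$, so $h\eta=\eta h$ iff $\varepsilon_{-\alpha}=\varepsilon_\alpha$, i.e.\ $2\alpha\equiv 0\pmod{2\pi}$, giving $h\in\{id_{\mathbb{S}^1},\varphi_\pi\}$. If $h=\varepsilon_\alpha$, then $\varepsilon_\alpha\eta=\eta\varphi_\alpha\eta=\eta\cdot\eta\varphi_{-\alpha}=\varphi_{-\alpha}$ and $\eta\varepsilon_\alpha=\eta\cdot\eta\varphi_\alpha=\varphi_\alpha$, so $h\eta=\eta h$ iff $\varphi_{-\alpha}=\varphi_\alpha$, again forcing $\alpha\in\{0,\pi\}$ modulo $2\pi$, giving $h\in\{\eta,\varepsilon_\pi\}$. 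Taking the union delivers the four listed elements.

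For item (4), the key observation is that every element of $\Sigma^*$ has finite order: for $\alpha=2\pi p/q\in 2\pi\mathbb{Q}$ we have $\varphi_\alpha^q=id_{\mathbb{S}^1}$, while every $\varepsilon_\alpha$ is an involution by item (2). So if $T$ is a sub--semigroup of $\Sigma^*$ and $t\in T$, pick $n\geq 1$ with $t^n=id_{\mathbb{S}^1}$; if $n=1$ then $t=id_{\mathbb{S}^1}\in T$, and otherwise $t^{n-1}\in T$ is an inverse of $t$ and $t\cdot t^{n-1}=id_{\mathbb{S}^1}\in T$. Hence $T$ contains the identity and is closed under inverses, so it is a sub--group. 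I do not anticipate any real obstacle; the only thing to be careful about is bookkeeping of $\alpha$ modulo $2\pi$ and the order of composition in the right--action notation.
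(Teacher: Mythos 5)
Your proposal is correct and follows essentially the same route as the paper: direct computation of $\varepsilon_\alpha=\eta\varphi_\alpha=\varphi_{-\alpha}\eta$, case analysis on $\varphi_\alpha$ versus $\varepsilon_\alpha$ for the involution and centralizer computations in items (2)--(3), and the finite--order argument for item (4). The only difference is cosmetic: the paper disposes of item (4) with a one--line remark, whereas you spell out the standard $t^{n-1}$ inverse argument, which is fine.
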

\begin{proof}
(1) Consider $\theta,\alpha\in \mathbb{R}$, then
$e^{i\theta}\varepsilon_\alpha=e^{i(\alpha-\theta)}=e^{-i\theta}\varphi_\alpha=e^{i\theta}\eta\varphi_\alpha$
and
\linebreak
$e^{i\theta}\varepsilon_\alpha=e^{i(\alpha-\theta)}=e^{i(-\alpha+\theta)}\eta=e^{i\theta}\varphi_{-\alpha}\eta$.
\\
(2) For each $\alpha\in\mathbb{R}$, we have $\varphi_{2\alpha}=\varphi_\alpha^2=id_{\mathbb{S}^1}$ if and only if
$\alpha\in\pi\mathbb{Z}$, i.e. $\varphi_\alpha\in\{\varphi_\pi,\varphi_{2\pi}\}=\{id_{\mathbb{S}^1}, \varphi_\pi\}$ and
$\varepsilon_\alpha^2=\eta\varphi_\alpha\eta\varphi_\alpha=
\eta\varphi_\alpha\varphi_{-\alpha}\eta=\eta\eta\varphi_\alpha\varphi_{\alpha}^{-1}\eta=\eta^2=id_{\mathbb{S}^1}$.
\\
(3) Consider $\alpha\in\mathbb{R}$, then:
\begin{eqnarray*}
\varphi_\alpha\eta=\eta\varphi_\alpha & \Leftrightarrow & \eta\varphi_{-\alpha}=\eta\varphi_\alpha
        \Leftrightarrow \varphi_{-\alpha}=\varphi_\alpha \Leftrightarrow \varphi_{2\alpha}=id_{\mathbb{S}^1} \\
& \Leftrightarrow & \alpha\in\pi\mathbb{Z} \Leftrightarrow\varphi_\alpha\in\{id_{\mathbb{S}^1}, \varphi_\pi\}
\end{eqnarray*}
and
\begin{eqnarray*}
\varepsilon_\alpha\eta=\eta\varepsilon_\alpha & \Leftrightarrow & \eta\varphi_\alpha\eta=\eta\eta\varphi_\alpha
    \Leftrightarrow \varphi_\alpha\eta=\eta\varphi_\alpha \Leftrightarrow
    \varphi_\alpha\in\{id_{\mathbb{S}^1}, \varphi_\pi\} \\
& \Leftrightarrow & \varepsilon_\alpha=\eta\varphi_\alpha\in \{\eta id_{\mathbb{S}^1},\eta \varphi_\pi\}
    =\{\eta,\varepsilon_\pi\}
\end{eqnarray*}
(4) Use the fact that each element of $\Sigma^*$ has finite order.
\end{proof}
\begin{example}\label{taha40}
$(\mathbb{S}^1,(\{\eta^i\varphi_\alpha^n:n\in\mathbb{Z}, i=0,1\}:\alpha\in \{\frac{2\pi}{m}:m\geq2\}))$ is a
    pseudo--co--decomposition of $(\mathbb{S}^1,\Sigma^*)$ to
    non--minimal (non--point transitive) transformation groups.
\end{example}
\begin{proof}
For each $m\geq2$, $T_m=\{\eta^i\varphi_\frac{2\pi}{m}^n:n\in\mathbb{Z}, i=0,1\}=
\{\eta^i\varphi_\frac{2n\pi}{m}:1\leq n\leq m, i=0,1\}$ is a finite subgroup of $T$, hence $e^{i\theta}T_m$ is a finite
non--dense subset of $\mathbb{S}^1$ for each  $e^{i\theta}\in\mathbb{S}^1$. Therefore $(\mathbb{S}^1,T_m)$
is not point transitive. It is clear that $\Sigma^*=\bigcup\{T_m:m\geq2\}$. On the other hand for each
$s,t\geq1$ we have:
\begin{eqnarray*}
T_sT_t & = & \{\eta^i\varphi_\frac{2\pi n}{s}\eta^j\varphi_\frac{2\pi m}{t}:n,m\in\mathbb{Z}, 0\leq i,j\leq 1\} \\
& = & \{\eta^i\varphi_\frac{-2\pi m}{t}\eta^j\varphi_\frac{-2\pi n}{s}:n,m\in\mathbb{Z}, 0\leq i,j\leq 1\} \\
& = & \{\eta^i\varphi_\frac{2\pi m}{t}\eta^j\varphi_\frac{2\pi n}{s}:n,m\in\mathbb{Z}, 0\leq i,j\leq 1\}=T_tT_s
\end{eqnarray*}
\end{proof}
\begin{lemma}\label{taha50}
If $T$ is an infinite subgroup of $\Sigma^*$, then $(\mathbb{S}^1,T)$ is minimal.
\end{lemma}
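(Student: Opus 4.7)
My plan is to reduce to showing that $T\cap\Sigma$ is infinite, and then exploit the fact that $\Sigma$ (identified with $\mathbb{Q}/\mathbb{Z}$) has dense infinite subgroups.

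First I would observe that $\Sigma^{*}=\Sigma\cup\eta\Sigma$ by Lemma~\ref{taha30}(1), so $T=(T\cap\Sigma)\sqcup(T\cap\eta\Sigma)$. If $T\cap\Sigma$ were finite, then $T\cap\eta\Sigma$ would be infinite; picking any fixed $\varepsilon_{\alpha_{0}}\in T\cap\eta\Sigma$ and applying Lemma~\ref{taha30}(1) twice, for every $\varepsilon_{\alpha}\in T\cap\eta\Sigma$ one computes
\[
\varepsilon_{\alpha_{0}}\varepsilon_{\alpha}=\eta\varphi_{\alpha_{0}}\eta\varphi_{\alpha}=\varphi_{-\alpha_{0}}\eta\eta\varphi_{\alpha}=\varphi_{\alpha-\alpha_{0}}\in T\cap\Sigma.
\]
Since the $\varepsilon_{\alpha}$'s in $T$ are indexed by infinitely many distinct $\alpha\pmod{2\pi}$, this produces infinitely many distinct rotations in $T\cap\Sigma$, contradicting the finiteness assumption. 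So $H:=T\cap\Sigma$ is infinite.

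Next, the correspondence $\varphi_{\alpha}\leftrightarrow\alpha+2\pi\mathbb{Z}$ identifies $\Sigma$ with $2\pi\mathbb{Q}/2\pi\mathbb{Z}\cong\mathbb{Q}/\mathbb{Z}$, making $H$ an infinite subgroup of $\mathbb{Q}/\mathbb{Z}$. Every element of $\mathbb{Q}/\mathbb{Z}$ has finite order, and for each $N$ the subgroup of elements of order dividing $N$ is the cyclic group $\tfrac{1}{N}\mathbb{Z}/\mathbb{Z}$, which has only $N$ elements; hence $H$ must contain elements of arbitrarily large order. Thus for every $\varepsilon>0$ I can find $n$ with $2\pi/n<\varepsilon$ and $k$ with $\gcd(k,n)=1$ such that $\varphi_{2\pi k/n}\in H$. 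The cyclic subgroup generated is exactly $\{\varphi_{2\pi j/n}:0\le j<n\}\subseteq H$, so for any $e^{i\theta}\in\mathbb{S}^{1}$ the orbit $e^{i\theta}H$ contains the $n$ equally spaced points $e^{i(\theta+2\pi j/n)}$, hence is $\varepsilon$-dense. Letting $\varepsilon\to0$ gives $\overline{e^{i\theta}H}=\mathbb{S}^{1}$, and since $H\subseteq T$ this forces $\overline{e^{i\theta}T}=\mathbb{S}^{1}$, proving minimality.

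The only real obstacle is the case split at the start: one has to notice that while $T$ may consist mostly of reflections $\varepsilon_{\alpha}$, the relation $\varepsilon_{\alpha_{0}}\varepsilon_{\alpha}=\varphi_{\alpha-\alpha_{0}}$ converts a large reflection-part into a large rotation-part, reducing the problem to the purely rotational case. After that reduction, the density step is the standard fact that every infinite subgroup of $\mathbb{Q}/\mathbb{Z}$ is dense in $\mathbb{R}/\mathbb{Z}$, which gives minimality essentially for free.
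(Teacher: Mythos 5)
Your proof is correct, and it differs from the paper's in how density of orbits is obtained. The first step is essentially the paper's: the paper also splits $T$ by the $\eta$-parity (pigeonhole gives infinitely many elements $\eta^{i}\varphi_{2\pi q_{n}}$ with the same $i\in\{0,1\}$) and multiplies two of them (one inverted) to land back in the rotation part, which is exactly your observation that $\varepsilon_{\alpha_{0}}\varepsilon_{\alpha}=\varphi_{\alpha-\alpha_{0}}$. Where you diverge is the density argument. The paper proceeds metrically: by compactness of $[0,1]$ the angles $q_{n}$ have a Cauchy subsequence, so for every $\kappa>0$ one finds $0<q_{n}-q_{m}<\kappa/2\pi$ and hence a rotation in $T$ by an arbitrarily small positive angle, whose forward orbit is $\kappa$-dense; this never uses rationality of the angles and would work for any infinite subgroup of rotations and reflections of the circle. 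You instead argue algebraically inside $\Sigma\cong\mathbb{Q}/\mathbb{Z}$: an infinite subgroup must contain elements of arbitrarily large order, and a rotation of exact order $n$ already produces an orbit of $n$ equally spaced points, hence $2\pi/n$-dense orbits. This is cleaner and avoids the Cauchy-subsequence step, but it leans on the torsion structure of $\Sigma^{*}$ (rational angles), so it is slightly less robust than the paper's route. One small point you should tighten: from ``the elements of order dividing $N$ form a cyclic group of order $N$'' it does not immediately follow that bounded order forces finiteness, since order at most $N$ is not the same as order dividing $N$; the quick fix is to note that if every element of $H$ had order at most $N$, then $H\subseteq\frac{1}{L}\mathbb{Z}/\mathbb{Z}$ with $L=\mathrm{lcm}(1,\ldots,N)$, which is finite. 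With that adjustment your argument is complete.
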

\begin{proof}
Suppose  $T\subseteq\Sigma^*=\{\varphi_\alpha:\alpha\in2\pi(\mathbb{Q}\cap[0,1))\}\cup\{\varepsilon_\alpha:\alpha\in2\pi(\mathbb{Q}\cap[0,1))\}=\{\eta^i\varphi_\alpha:\alpha\in2\pi(\mathbb{Q}\cap[0,1)),i=0,1\}$ is an infinite
subgroup. There exists $i\in\{0,1\}$ and one--to--one sequence $\{q_n\}_{n\geq1}$ in $\mathbb{Q}\cap[0,1)$ such that
$\{\eta^i\varphi_{2\pi q_n}\}_{n\geq1}$ is a sequence in $T$. Choose $\kappa>0$, using compactness of $[0,1]$,
$\{q_n\}_{n\geq1}$ has a Cauchy subsequence, hence there exist $n,m\geq1$ such that $0<q_n-q_m<\frac{\kappa}{2\pi}$.
Moreover
\[T\ni\eta^i\varphi_{2\pi q_n}(\eta^i\varphi_{2\pi q_m})^{-1}=\eta^i\varphi_{2\pi (q_n-q_m)}\eta^i=\left\{\begin{array}{lc}
\varphi_{2\pi (q_n-q_m)} & i =0 \:, \\ \varphi_{2\pi (q_m-q_n)}=\varphi_{2\pi (q_n-q_m)}^{-1} & i=1\:.\end{array}\right.\]
In particular $\varphi_{2\pi (q_n-q_m)}\in T$. Consider $\theta,\mu\in\mathbb{R}$, there exists $j\in\mathbb{Z}$
such that $2\pi (q_n-q_m)j\leq\theta-\mu<2\pi (q_n-q_m)(j+1)$, i'e. $0\leq\theta-(\mu+2\pi (q_n-q_m)j)<2\pi (q_n-q_m)<\kappa$.
However $e^{i\lambda}:=e^{i(\mu+2\pi (q_n-q_m)j)}=e^{i\mu}\varphi_{2\pi (q_n-q_m)j}=e^{i\mu}\varphi_{2\pi (q_n-q_m)}^{j}\in e^{i\mu}T$, so:
\[\forall\kappa>0\exists\lambda\:\:(0\leq\theta-\lambda<\kappa\wedge e^{i\lambda}\in e^{i\mu}T)\]
which shows $e^{i\theta}\in\overline{e^{i\mu}T}$ for all $\theta\in\mathbb{R}$. Therefore
$\mathbb{S}^1=\overline{e^{i\mu}T}$ for all $\mu\in\mathbb{R}$ and $(\mathbb{S}^1,T)$ is minimal.
\end{proof}
\begin{theorem}\label{taha60}
$(\mathbb{S}^1,\Sigma^*)$ is not co--decomposable to
    non--minimal (non--point transitive) transformation semigroups.
\end{theorem}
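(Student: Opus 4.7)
The plan is to argue by contradiction. Suppose $(\mathbb{S}^1,(S_\alpha:\alpha\in\Gamma))$ is a co--decomposition of $(\mathbb{S}^1,\Sigma^*)$ to non--minimal transformation semigroups. The first step is to constrain the $S_\alpha$ drastically: by Lemma~\ref{taha30}(4) each $S_\alpha$ is in fact a sub--group of $\Sigma^*$, and by the contrapositive of Lemma~\ref{taha50} non--minimality of $(\mathbb{S}^1,S_\alpha)$ forces $S_\alpha$ to be finite. Since $\Sigma^*$ acts effectively on $\mathbb{S}^1$ and $id_{\mathbb{S}^1}$ lies in every $S_\alpha$, Lemma~\ref{salam20}(1) (or rather the commutation identity extracted in its proof) yields the key structural property: elements of distinct $S_\alpha$ and $S_\beta$ must commute in $\Sigma^*$.

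The heart of the argument is a centralizer calculation in $\Sigma^*$. Because $\Sigma^*$ is non--abelian (Lemma~\ref{taha30}(3)), the groups $S_\alpha$ cannot all lie inside the rotation sub--group $\Sigma$, for then the sub--semigroup generated by $\bigcup_\alpha S_\alpha$ would remain inside $\Sigma\subsetneq\Sigma^*$. So I fix $\alpha_0$ with $S_{\alpha_0}\not\subseteq\Sigma$ and choose a reflection $\varepsilon_\mu\in S_{\alpha_0}$. A short calculation via Lemma~\ref{taha30}(1), giving $\varphi_\beta\varepsilon_\mu=\varepsilon_{\mu-\beta}$, $\varepsilon_\mu\varphi_\beta=\varepsilon_{\mu+\beta}$, $\varepsilon_\nu\varepsilon_\mu=\varphi_{\mu-\nu}$, and $\varepsilon_\mu\varepsilon_\nu=\varphi_{\nu-\mu}$, shows that the centralizer of $\varepsilon_\mu$ in $\Sigma^*$ is exactly the Klein four--group $\{id_{\mathbb{S}^1},\varphi_\pi,\varepsilon_\mu,\varepsilon_{\mu+\pi}\}$. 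Applying the commutation property, $\bigcup_{\alpha\neq\alpha_0}S_\alpha$ is contained in this four--element group, whence $H:=\bigcup_\alpha S_\alpha$ is a finite subset of $\Sigma^*$.

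To finish I would observe that $\Sigma^*\cong\mathbb{Q}/\mathbb{Z}\rtimes\mathbb{Z}/2$ is a torsion group whose rotation part is locally cyclic, so every finitely generated sub--group of $\Sigma^*$ is finite. Consequently the sub--group $\langle H\rangle$ is finite, contradicting $\langle H\rangle=\Sigma^*$. The step I expect to be most delicate is the centralizer identification together with this closing torsion--group argument; the commutation--based reduction itself is essentially routine once Lemmas~\ref{taha30}(4),~\ref{taha50}, and~\ref{salam20}(1) are in hand.
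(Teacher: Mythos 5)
Your proof is correct and follows essentially the same route as the paper: Lemma~\ref{taha50} (plus Lemma~\ref{taha30}(4)) forces each factor to be finite, the commutation coming from effectiveness and the multi--transformation structure traps all remaining factors in a four--element centralizer of a reflection, and local finiteness of $\Sigma^*$ turns finiteness of $\bigcup_\alpha S_\alpha$ into the contradiction that $\Sigma^*$ is finite. The only cosmetic difference is that you centralize a reflection $\varepsilon_\mu$ chosen inside some $S_{\alpha_0}$, whereas the paper writes $\eta$ as a product over finitely many factors and invokes the centralizer computation of Lemma~\ref{taha30}(3) directly.
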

\begin{proof}
Suppose $(\mathbb{S}^1,(H_\alpha:\alpha\in\Gamma))$ is a co--decomposition of $(\mathbb{S}^1,T)$ to non--point transitive
transformation semigroups. In particular for each $\alpha\in\Gamma$, $(\mathbb{S}^1,H_\alpha)$ is not minimal and
$H_\alpha$ is finite by Lemma~\ref{taha50}. There exist $\alpha_1,\ldots,\alpha_n\in\Gamma$ such that
$\eta\in S_{\alpha_1}\cdots S_{\alpha_n}$, thus for each $\alpha\in\Gamma\setminus\{\alpha_1,\ldots,\alpha_n\}$,
$s\in S_\alpha$ and $z\in \mathbb{S}^1$ we have $zs\eta=z\eta s$, i.e. $s\eta=\eta s$. By item (3) in Lemma~\ref{taha30},
$\bigcup\{S_\alpha:\alpha\in\Gamma\}\setminus\bigcup\{S_{\alpha_i}:1\leq i\leq n\}\subseteq \{\eta, id_{\mathbb{S}^1}, \varphi_\pi,\varepsilon_\pi\}$. Therefore $\bigcup\{S_\alpha:\alpha\in\Gamma\}\subseteq
S_{\alpha_1}\cup\cdots\cup S_{\alpha_n}\cup\{\eta, id_{\mathbb{S}^1}, \varphi_\pi,\varepsilon_\pi\}$.
Therefore $\bigcup\{S_\alpha:\alpha\in\Gamma\}$ is a finite subset of $\Sigma^*$. Using Lemma~\ref{taha30} and the fact that
each element of $\Sigma^*$ has finite order, sub--semigroup generated by $\bigcup\{S_\alpha:\alpha\in\Gamma\}$ is finite,
i.e. $\Sigma^*$ is finite which is a contradiction.
\end{proof}
\begin{example}\label{taha70}
By Example~\ref{taha40} and Theorem~\ref{taha60},
minimal transformation group $(\mathbb{S}^1,\Sigma^*)$ is pseudo--co--decomposable to non--point transitive
(non--minimal) transformation groups, however it is not co--decomposable to non--point transitive (non--minimal)
transformation semigroups.
\end{example}
\begin{corollary}
The following strict inclusions are valid: (compare with
~\cite[Theorem 3.6]{decom}): {\small\begin{center}
    \begin{tabular}{|c|} \hline
    The class  of all minimal transformation semigroups\\
        \begin{tabular}{|c|} \hline
        The class  of all minimal transformation semigroups \\ pseudo--co--decomposable to non--minimal ones\\
            \begin{tabular}{|c|} \hline
            The class of all minimal transformation semigroup \\ co--decomposable to non--minimal ones \\
            Remark~\ref{taha20} \\ \hline
            \end{tabular}\\
        Example~\ref{taha70} \\ \hline
        \end{tabular}\\
    (Example: trivial transformation group $(\{e\},\{id_{\{e\}}\})$) \\ \hline
    \end{tabular}
\end{center}}
\noindent The reader can replace ``(non--)minimal'' by ''(non--)point transitive'' in the above diagram.
\end{corollary}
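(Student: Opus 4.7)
The plan is to verify the three layers of the diagram by combining the results already proved in this subsection with one easy observation about the trivial transformation group. All three inclusions themselves are immediate from the definitions, since pseudo--co--decomposability is a formal weakening of co--decomposability, and both classes in the diagram are defined as sub--classes of the class of minimal transformation semigroups. So the substance of the corollary is the three strictness assertions, and I would treat them in the order innermost--to--outermost.

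First, I would cite Remark~\ref{taha20}, which exhibits $(\mathbb{S}^1,\Sigma)$ as a minimal transformation group that is genuinely co--decomposable to non--minimal ones; this populates the innermost box and in particular shows that the class of minimal co--decomposable examples is nonempty. Next, to separate the co--decomposable layer from the pseudo--co--decomposable layer, I would invoke Example~\ref{taha70}: $(\mathbb{S}^1,\Sigma^*)$ is minimal (Lemma~\ref{taha50} gives that $\Sigma^*$ is an infinite subgroup and hence minimal, or it follows directly because $\Sigma\subseteq\Sigma^*$ and $(\mathbb{S}^1,\Sigma)$ is already minimal), Example~\ref{taha40} provides a pseudo--co--decomposition by the finite subgroups $T_m$ to non--minimal pieces, yet Theorem~\ref{taha60} rules out any honest co--decomposition to non--point--transitive ones. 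This witnesses a minimal transformation group that lies in the middle layer but not the inner one.

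Finally, to separate the pseudo--co--decomposable layer from the whole class of minimal transformation semigroups, I would use the trivial transformation group $(\{e\},\{id_{\{e\}}\})$ suggested in the diagram. It is vacuously minimal. On the other hand, any transformation semigroup whose phase space is the single point $\{e\}$ is automatically minimal, because the only orbit is $\{e\}$ itself and its closure is the whole space. Hence in any pseudo--co--decomposition $(\{e\},(S_\alpha:\alpha\in\Gamma))$ of $(\{e\},\{id_{\{e\}}\})$ every component transformation semigroup $(\{e\},S_\alpha)$ is minimal, so the trivial group cannot be pseudo--co--decomposable to non--minimal transformation semigroups.

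There is no real obstacle; the work has already been done in Example~\ref{taha40}, Theorem~\ref{taha60} and Lemma~\ref{taha50}, and the only point that deserves a line of its own is the last one about the single--point phase space. The parenthetical remark that ``(non--)minimal'' can be replaced by ``(non--)point transitive'' is justified uniformly, since every statement used (minimality of $(\mathbb{S}^1,\Sigma^*)$, non--point--transitivity of the finite pieces $T_m$, Theorem~\ref{taha60} which is already formulated for non--point--transitive co--decompositions, and the trivial single--point example) is valid verbatim under either reading.
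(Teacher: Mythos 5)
Your proposal is correct and follows essentially the same route the paper intends: the inclusions are definitional, Remark~\ref{taha20} populates the innermost class, Example~\ref{taha70} (resting on Example~\ref{taha40}, Theorem~\ref{taha60} and Lemma~\ref{taha50}) separates the co--decomposable layer from the pseudo--co--decomposable one, and the trivial one--point transformation group separates the outer layer, your explicit observation that every transformation semigroup on a one--point phase space is minimal (hence no pseudo--co--decomposition to non--minimal pieces can exist) being precisely the line the paper leaves implicit. Your closing check that all cited statements hold verbatim with ``(non--)point transitive'' in place of ``(non--)minimal'' also matches the paper's parenthetical claim.
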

\section{An arised question: strongly pseudo--co--decomposition of a transformation semigroup}
\noindent We say pseudo--co--decomposition
$(X,(S_\alpha:\alpha\in\Gamma))$ of $(X,S)$ is a strong
pseudo--co--decomposition of $(X,S)$ if for every
$\alpha_1,\ldots,\alpha_n \in\Gamma$ and permutation \linebreak
$\mathop{\{1,\ldots,n\}\to \{1,\ldots,n\}}\limits_{k\mapsto m_k}$
we have $S_{\alpha_1}S_{\alpha_2}\cdots S_{\alpha_n}=
S_{\alpha_{m_1}}S_{\alpha_{m_2}}\cdots S_{\alpha_{m_n}}$. One can
simply verify that in transformation semigroup $(X,S)$:
\begin{itemize}
\item if $S$ acts effectively on $X$, then any co--decomposition of $(X,S)$ is a strongly pseudo--co--decomposition of $(X,S)$,
\item every strongly pseudo--co--decomposition of $(X,S)$ is a pseudo--co--decomposition of $(X,S)$.
\end{itemize}
For dynamical property $\mathsf{P}$ we say the transformation
semigroup (group) $(X,S)$ is strongly pseudo--co--decomposable to $\mathsf{P}$
transformation semigroups (groups) if it has a strong pseudo--co--decomposition
like $(X,(S_\alpha:\alpha\in\Gamma))$ to transformation
semigroups (groups) such that for all $\alpha\in\Gamma$,
$(X,S_\alpha)$ has property $\mathsf{P}$, hence:
\begin{itemize}
\item by Example~\ref{salam10}, $(\mathfrak{B},T)$ is strongly pseudo--co--decomposable to
distal transformation groups,
\item in transformation semigroup $(X,S)$ with nontrivial $S$, let $M:=(S\times S)\sqcup\{\mathfrak{e}\}$ be a semigroup with identity $\mathfrak{e}$ and operation $(a,b)*(c,d)=(a,d)$ for each $(a,b),(c,d)\in S\times S$, then
$(X,M)$ is a (non--effective) transformation semigroup under action $x\mathfrak{e}:=x$ and $x(s,t):=xs$ (for $x\in X, s,t\in S$).
$(X,((S\times\{t\})\sqcup\{\mathfrak{e}\}:t\in S))$ is a pseudo--co--decomposition of $(X,M)$ and it is not a
strongly pseudo--co--decomposition of $(X,M)$.
\end{itemize}
In the class of all effective transformation (semi--)groups, $\mathcal C$, one may consider the following inclusions
(use Corollary~\ref{taha10} too):
{\small \begin{center}
\begin{tabular}{l}
The  class of all distal elements of $\mathcal C$  \\
$\subset$
The class of all elements of $\mathcal C$
co--decomposable to distal ones \\
$\subset$
The class of all elements of $\mathcal C$
strongly pseudo--co--decomposable to distal ones \\
$\mathop{\subseteq}\limits^{*}$
The class of all elements of $\mathcal C$
pseudo--co--decomposable to distal ones \\
$\subset$  $\mathcal C$.
\end{tabular}
\end{center}}
\noindent In the above chain of inclusions in order to have strict inclusions in
all cases we should have positive answer to the following question, which may be subject of next
research:
\begin{problem}
Is there any (effective) transformation (semi--)group $(X,S)$ pseudo--co--decomposable
to distal ones, which is not strongly pseudo--co--decomposable to distal ones?
\end{problem}

\noindent {\small
{\bf Safoura Arzanesh},
Faculty of Mathematics, Statistics and Computer Science,
College of Science, University of Tehran,
Enghelab Ave., Tehran, Iran
\\
({\it e-mail}: arzanesh.parsian@gmail.com)
\\
{\bf Fatemah Ayatollah Zadeh Shirazi},
Faculty of Mathematics, Statistics and Computer Science,
College of Science, University of Tehran,
Enghelab Ave., Tehran, Iran
\\
({\it e-mail}: f.a.z.shirazi@ut.ac.ir \& fatemah@khayam.ut.ac.ir)
\\
{\bf Reza Rezavand}, School of Mathematics, Statistics
and Computer Science, College of Science, University of Tehran,
Enghelab Ave., Tehran, Iran
\\
({\it e-mail}: rezavand@ut.ac.ir)

}


\begin{thebibliography}{99}

\bibitem{golestani} F. Ayatollah Zadeh Shirazi, N. Golestani, On classification of transformation semigroups: indicator sequences and indicator topological spaces, Filomat, 2012 (26/2), 313--329.

\bibitem{chu} H. Chu, Some inheritance theorems in topological dynamics, Journal of  London Mathematical Society, 1968 (43), 168--170.

\bibitem{ellis} R. Ellis, Lectures on topological dynamics, W. A. Benjamin Inc., 1969.

\bibitem{hj} G. Hjorth, Classification and orbit equivalence relations,
Mathematical Surveys and Monographs. 75. Providence, RI: American Mathematical Society (AMS), (2000).

\bibitem{decom} M. Sabbaghan, F. Ayatollah Zadeh Shirazi, A. Hosseini, Co--decomposition of a transformation semigroup, Ukrainian Mathematical Journal, 2014 (Tom. 65, No. 11), 1506--1514.

\end{thebibliography}
\end{document}